\definecolor{hot}{RGB}{65,105,225}
\theoremstyle{plain}
\newtheorem{theorem}{Theorem}[section]
\newtheorem{corollary}[theorem]{Corollary}
\newtheorem{lemma}[theorem]{Lemma}
\theoremstyle{definition}
\newtheorem{definition}[theorem]{\sc Definition}
\newtheorem{example}[theorem]{\sc Example}
\newtheorem{remark}[theorem]{\sc Remark}
\newtheorem{question}[theorem]{\sc Question}
\numberwithin{equation}{section}
\newcommand\cC{\mathcal{C}}
\newcommand{\im}{{\rm{Im\hspace{2pt}}}}
\def\bN{\mathbb{N}}
\def\bK{\mathbb{K}}
\def\bR{\mathbb{R}}
\def\bC{\mathbb{C}}
\def\bP{\mathbb{P}}
\def\m{\setminus}
\newcommand{\fin}{\hspace*{\fill}$\square$}
\DeclareMathOperator{\Sing}{Sing}
\DeclareMathOperator{\diffeo}{diffeo}
\title[]{Bifurcations of polynomial maps with diffeomorphic and connected fibers}
\author{Cezar Joi\c ta}
\address{Institute of Mathematics of the Romanian Academy, P.O. Box 1-764,
 014700 Bucharest, Romania and Laboratoire Europ\' een Associ\'e  CNRS Franco-Roumain Math-Mode}
\email{Cezar.Joita@imar.ro}
\author{Mihai Tib\u ar}
\address{Univ. Lille, CNRS, UMR 8524 -- Laboratoire Paul Painlev\'e, F-59000 Lille,
France}  
\email{mihai-marius.tibar@univ-lille.fr}
\thanks{The authors acknowledge support from the project ``Singularities and Applications'' - CF 132/31.07.2023 funded by the European Union - NextGenerationEU - through Romania's National Recovery and Resilience Plan.}
\keywords{}
\subjclass[2010]{}
\begin{document}


\begin{abstract}  
We show the existence of polynomial maps  which have a regular bifurcation value, while over a neighbourhood of  this value the fibres are connected and diffeomorphic.

\end{abstract}

\dedicatory{Dedicated to Professor Vasile Br\^inz\u anescu on his $80^{th}$ birthday.}

\maketitle



\section{Introduction}

Let $F :\bK^m  \to \bK^p$ be a polynomial map, where $m\ge p\ge 1$, and $\bK$ is either $\bR$ or $\bC$.  
We say that $F$ is \emph{(locally) ${\cC} ^\infty$ trivial at $t_0\in \bK^p$} if there is a
neighbourhood $D$ of $t_0$ such that the
restriction $F_| :  F^{-1} (D) \to D$ is a ${\cC} ^\infty$ trivial
fibration, i.e. there exists a commutative diagram
$$\xymatrix{
& F^{-1}(D) \ar[dr]^{F} \ar[rr]^{\sim}&& F^{-1}(t_0)\times D\ar[dl]^{p_2}\\
 && D 
}$$
 We say that $t_0$ is a bifurcation value of $F$ whenever $F$ is not ${\cC} ^\infty$ trivial at $t_0$. The set of bifurcation values of $F$ is denoted by $B(F)$.
If $\Sing(F)$ denotes the set of points in $\bK^m$ where $F$ is not a submersion, 
then, obviously, $B(F)\supset \overline{F(\Sing(F))}$.

If $m=p$, the famous Jacobian Conjecture asks to show that $\Sing(F)=\emptyset$ implies that $B(F)=\emptyset$.
If $\bK=\bR$, the Jacobian Conjecture is known to be false, as shown by Pinchuk \cite{Pi}. In the complex case, the 
conjecture is wide open.
\medskip

We will assume from now on that $m>p$.
\medskip

Characterising  the bifurcation locus of a map $F$ is a difficult problem. As the following example shows, the condition  $\Sing(F)\neq\emptyset$  cannot be the only obstruction to local triviality.
\begin{example} \cite{Br} \label{ex:1}
 Let $F: \bK^2\to \bK, \ \ F(x,y) =  x + x^2y$. Then  $\Sing(F)=\emptyset$, while $0$ is a bifurcation value.
 \end{example}
 To understand the bifurcation locus $B(F)$ one has to study the asymptotical behavior of $F$ ``at infinity". One way to do it, is to compactify the fibers of
 $F$ in $\bP^n$, and to control the intersections of the compactified fibers with the hyperplane at infinity. In the case $p=1$,  one may find in \cite{Ti2} several methods of control under some natural conditions. In the case $p\geq 2$, several evaluations of $B(F)$ may be found in e.g. \cite{Ti1}, \cite{Ga}, \cite{KOS},  \cite{Je}, \cite{DRT}.
 
 \medskip
 
A necessary condition for the map $F$ to be locally trivial at $t_0$ is that all fibers of $F$ are diffeomorphic in a neighborhood of $t_0$.
 The following theorem due to Suzuki \cite{Su}, see also \cite{HL}, shows that, in the special case of polynomial functions $\bC^2\to\bC$, this condition is also sufficient.

\begin{theorem}
Let $F: \bC^2 \to \bC$ be a polynomial function and let $t_0 \in \bC\m F(\Sing F)$. Then $t_0$ is not a bifurcation value if and only if the Euler characteristic of the fibres $\chi(F^{-1}(t))$ is constant for $t$ varying in some neighborhood of $t_0$.
\end{theorem}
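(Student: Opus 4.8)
The two implications are of very different natures. The direction ``$t_0\notin B(F)\Rightarrow\chi$ constant'' is immediate: a local $\cC^\infty$ trivialisation exhibits all nearby fibres as diffeomorphic, hence of equal Euler characteristic. The whole content lies in the converse, on which I concentrate. Fix a disk $D$ around $t_0$ small enough that $D\cap F(\Sing F)=\emptyset$, which is possible since $t_0\notin F(\Sing F)$ and $F(\Sing F)$ is finite. Then every $C_t:=F^{-1}(t)$, $t\in D$, is a smooth affine plane curve and $F_|\colon F^{-1}(D)\to D$ is a submersion. The only obstruction to applying Ehresmann's theorem is non-properness, i.e. the behaviour of the family $\{C_t\}$ at infinity; the plan is to show that constancy of $\chi$ forces this behaviour to be topologically constant, whence triviality.

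The key is an Euler-characteristic bookkeeping at infinity. Let $d=\deg F$ and let $\bar C_t\subset\bP^2$ be the projective closure; all the $\bar C_t$ meet the line at infinity $L_\ity$ in the same finite set $P_1,\dots,P_s$, namely the zeros of the leading form $F_d$. Because the affine part $C_t$ is smooth, every singularity of $\bar C_t$ lies on $L_\ity$. Writing $g_t$ for the genus of the normalisation and $r_t$ for the number of branches of $\bar C_t$ at infinity, one has $\chi(C_t)=2-2g_t-r_t$; since the arithmetic genus $\binom{d-1}{2}$ is independent of $t$, the genus drop equals the total $\delta$-invariant at infinity, $g_t=\binom{d-1}{2}-\delta^\ity_t$. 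Combining these with the relation $\mu^\ity_t=2\delta^\ity_t-r_t+s$ for the total Milnor number of the isolated plane-curve singularities of $\bar C_t$ at the points $P_j$, I would record
\begin{equation*}
\chi(C_t)=c+\mu^\ity_t,\qquad c:=2-2\tbinom{d-1}{2}-s ,
\end{equation*}
so that $\chi(C_t)$ is constant on $D$ if and only if $\mu^\ity_t$ is constant on $D$.

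Now upper semicontinuity of the total Milnor number at the fixed points $P_j$ gives $\chi(C_{t_0})\ge\chi(C_t)$ for $t$ near $t_0$ — exactly the jump seen in Broughton's Example~\ref{ex:1}, where $\chi(C_0)=1>0=\chi(C_{t})$. The hypothesis of constant $\chi$ forces $\mu^\ity_t\equiv\mu^\ity_{t_0}$, i.e. the family of germs $\{(\bar C_t,P_j)\}_t$ is $\mu$-constant at each $P_j$. For plane-curve singularities $\mu$-constancy is equivalent to Zariski equisingularity, hence each such family is $\cC^\infty$-trivial; from this I would extract, for each $j$, a $\cC^\infty$ trivialisation of $F^{-1}(D)$ on a neighbourhood of $P_j$ compatible with $F$. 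On the compact part $\{\|z\|\le R\}\cap F^{-1}(D)$, with $R$ large, $F$ is a proper submersion and Ehresmann yields a trivialisation; it remains to glue these pieces.

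The hard part is precisely this globalisation at infinity. One must upgrade the germwise equisingular triviality at the finitely many $P_j$ to a trivialisation of a full neighbourhood of $L_\ity$ in the total space matching $F$, and verify the compatibility — transversality of the large spheres to every $C_t$, $t\in D$ — that lets the interior and the at-infinity vector fields patch. Concretely, I would assemble a smooth lift of $\partial/\partial t$ which is tangent to the large spheres near infinity and prove it is complete over $D$, i.e. that no integral trajectory escapes to infinity in finite $D$-time; equisingularity of the family at infinity is what guarantees this control. The flow of such a complete lifted field then produces the diffeomorphism $F^{-1}(D)\cong C_{t_0}\times D$, showing $t_0\notin B(F)$.
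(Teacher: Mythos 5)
This statement is quoted in the paper from Suzuki \cite{Su} and H\`a--L\^e \cite{HL}; the paper itself contains no proof of it, so your attempt can only be measured against those sources. Your route is essentially H\`a--L\^e's. The forward implication is indeed immediate, and your bookkeeping for the converse is correct: with $\bar C_t\subset\bP^2$ the closure of the (reduced, smooth) affine fibre, one has $\chi(C_t)=2-2\binom{d-1}{2}-s+\mu^\ity_t$ (this survives reducibility of the fibre, since the number of components cancels between the normalisation and the genus formula), the singularities of $\bar C_t$ are confined to the $t$-independent points $P_1,\dots,P_s$ of $L_\ity$, and conservation of the Milnor number under deformation gives the semicontinuity $\chi(C_{t_0})\ge\chi(C_t)$ and hence the equivalence of constant $\chi$ with $\mu$-constancy of each family of germs $(\bar C_t,P_j)$.

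Two points in the last third are genuinely incomplete. (i) The claim that a $\mu$-constant family of plane curve germs is $\cC^\infty$-trivial is an overstatement: $\mu$-constancy yields Zariski equisingularity, hence \emph{topological} triviality of the germs, and no more (the cross-ratio of the tangent lines of four concurrent lines is already a modulus in a $\mu$-constant family). This is reparable, because the trivialisation of $F$ over $D$ is ultimately produced by integrating a smooth lift of the base vector fields on the smooth manifold $F^{-1}(D)$, so the smoothness of the final trivialisation is automatic once the lift is complete; but you should not invoke $\cC^\infty$-triviality of the germs. (ii) The completeness of that lift --- equivalently, uniform transversality of the fibres $C_t$ to all large spheres for $t\in D$ --- is the actual content of the theorem, and you assert rather than derive it from equisingularity. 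Note moreover that controlling the affine fibre $C_t=\bar C_t\m L_\ity$ near $P_j$ requires equisingularity of the \emph{pair} $(\bar C_t\cup L_\ity,P_j)$, not of $\bar C_t$ alone; this does follow, since $\mu(\bar C_t\cup L_\ity,P_j)=\mu(\bar C_t,P_j)+2(\bar C_t\cdot L_\ity)_{P_j}-1$ and the local intersection numbers with $L_\ity$ are the $t$-independent multiplicities of the linear factors of the leading form, but the step must be made explicitly, and the passage from this germwise topological triviality to the non-escape of integral curves of the lifted field still has to be written out. As it stands the argument is a correctly oriented sketch of the H\`a--L\^e proof rather than a complete proof.
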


It was discovered in \cite{TZ} that for polynomial functions $\bR^2\to\bR$, the constancy of  the Euler characteristic of the fibres is not sufficient for local triviality, and that this is due to two new phenomena which might appear: ``vanishing" and ``splitting".  Each of these phenomena necessarily produce bifurcation values. For instance in Example \ref{ex:1} one has a splitting at infinity in the fibre $F^{-1}(t)$ when $t\to 0$.

Here is an example from \cite{TZ} where a bifurcation value occurs despite the fact that the fibres of $F$ are diffeomorphic.
This shows that the vanishing and the splitting may occur in pairs, and such that the diffeomorphism type of the fibre is preserved.
\begin{example} \label{ex:eulercst}
The two variable polynomial $F:\bR^2 \to \bR$,
\[  F(x,y):=x^2y^3(y^2-25)^2+2xy(y^2-25)(y+25)-
(y^4+y^3-50y^2-51y+575)  \]
has the property that for all $t\in \bR$ such that $|t|$ is small enough, the fibre $F^{-1}(t)$  has $5$ non-compact connected components, thus each component is diffeomorphic to a line. Therefore the fibres $F^{-1}(t)$ are diffeomorphic, for all small enough $|t|$.  Nevertheless, it turns out that the value 0 is a bifurcation value. This is due to the fact that, when $t<0$ and $t\to 0$,  out of the 5 lines components of the fibre $F^{-1}(t)$, two split at infinity, and two others vanish at infinity.
\end{example}

The results of \cite{TZ} were extended in \cite{JT1} to polynomial maps $\bR^{n+1}\to\bR^n$. We present below the definition of vanishing as it was formulated in \cite{JT1} together with one of the results of that paper.

\begin{definition} Let $F:M\to B$ be a $\cC^\infty$ map between $\cC^\infty$ manifolds, and let $\lambda$ be a point in $B$.
 We say that there are \emph{vanishing components at infinity when $t$ converges to $\lambda$} if there 
 is a sequence of points $t_k \in B$,  $t_k\to \lambda$,  such that for some choice of a connected component $C_{t_k}$ 
 of $F^{-1}(t_k)$ the sequence of sets $\{C_{t_k}\}_{k\in \bN}$ is \emph{locally finite}, i.e., for any compact $K\subset M$, 
 there is an integer $p_K\in \bN$ such that $C_{t_q}\cap K = \emptyset$ for all $q \ge p_K$.
 \end{definition}
 
 \begin{theorem}[\cite{TZ} for $n=2$; \cite{JT1} for $n\geq 3$] \label{t:real}
 Let $X\subset \bR^m$ be a real nonsingular irreducible algebraic set of dimension $n\ge 2$ and  let  $F : X\to \bR^{n-1}$ be an algebraic map. Let $a$ be an interior point of the set $\im F \setminus \overline{F(\Sing F)}\subset \bR^{n-1}$ and let $X_{t} := F^{-1}(t)$.  Then $a\not\in B(F)$ 
if and only if the following two conditions are satisfied:
\begin{enumerate}
 \item 
 the Euler characteristic $\chi(X_{t})$ is constant for
$t$ in some neighbourhood of $a$, 

\item there is no  
component of $X_t$ which vanishes at infinity as $t$ tends to $a$.
\end{enumerate}\fin
\end{theorem}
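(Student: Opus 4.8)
The plan is to prove the two implications separately, the forward implication being essentially formal and the converse carrying all the work. Throughout, note that since $\dim X=n$ and $a$ is a regular value, each fibre $X_t$, $t$ near $a$, is a smooth $1$-dimensional real algebraic set, hence a disjoint union of finitely many circles and lines.

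For the implication $a\notin B(F)\Rightarrow(1),(2)$, suppose $F$ is $\cC^\infty$ trivial over a neighbourhood $D$ of $a$, with trivialising diffeomorphism $h\colon F^{-1}(D)\xrightarrow{\sim} X_a\times D$ commuting with the projections. Then all fibres $X_t$ are diffeomorphic to $X_a$, so $\chi(X_t)$ is constant, giving (1). For (2), suppose toward a contradiction that some sequence of components $C_{t_k}$ of $X_{t_k}$, with $t_k\to a$, is locally finite. Under $h$ each $C_{t_k}$ is carried onto a set $C^{(k)}\times\{t_k\}$ for a single component $C^{(k)}$ of $X_a$; as $X_a$ has only finitely many components, some $C$ occurs for infinitely many $k$. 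Fixing $x_0\in C$, the points $h^{-1}(x_0,t_k)$ lie in $C_{t_k}$ and converge to $x_0$, so $C_{t_k}$ meets every fixed closed ball around $x_0$ for $k$ large, contradicting local finiteness. Hence (2) holds.

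For the converse, assume (1) and (2). First I would establish a uniform confinement statement: there exist $R_0>0$ and a neighbourhood $D$ of $a$ such that every connected component of every fibre $X_t$, $t\in D$, meets $\overline{B}_{R_0}$. Were this to fail, one could pick radii $R_j\to\infty$ and shrinking neighbourhoods, producing parameters $t_j\to a$ and components $C_{t_j}$ of $X_{t_j}$ disjoint from $B_{R_j}$; for any compact $K$ one has $K\subset B_{R_j}$ for $j$ large, whence $C_{t_j}\cap K=\emptyset$ eventually, so $\{C_{t_j}\}_j$ is a vanishing sequence, contradicting (2). Next, using the semi-algebraicity of $F$ and of $\rho(x)=|x|$, I would choose a regular radius $R\ge R_0$ with $S_R$ transverse to $X_t$ for all $t\in D$ (after shrinking $D$). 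Then $F$ restricted to the compact piece $F^{-1}(D)\cap\overline{B}_R$ is a proper submersion of manifolds-with-boundary, and Ehresmann's theorem trivialises it over $D$, compatibly on $\{|x|=R\}$. Here hypothesis (1) supplies the combinatorial control at infinity: $\chi(X_t)$ equals the number of line components of $X_t$ (each circle contributing $0$), and for $R$ beyond the critical radii of $\rho|_{X_t}$ the intersection $X_t\cap S_R$ consists of exactly $2\chi(X_t)$ points, the two ends of each line. Thus by (1) the number of ends crossing $S_R$ is constant over $D$, which is exactly the absence of splitting at infinity.

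It remains to trivialise the exterior piece $F^{-1}(D)\setminus B_R$ and glue it to the interior one. By confinement every component already meets $\overline{B}_R$, so the exterior fibres $X_t\setminus B_R$ are disjoint unions of radial rays, constant in number by the previous paragraph; one would extend the boundary trivialisation outward along these rays and glue across $S_R$ to obtain a $\cC^\infty$ trivialisation of $F$ over $D$, proving $a\notin B(F)$. I expect the main obstacle to lie precisely in this last step, namely in making the trivialisation at infinity genuinely $\cC^\infty$ and, equivalently, in producing lifts of the coordinate vector fields on $D$ whose flows are complete: one must exclude integral curves escaping to infinity in finite base-time, which requires uniform semi-algebraic control of $\rho$ on the family of ends, i.e.\ no accumulation of critical radii of $\rho|_{X_t}$ as $t\to a$. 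Confinement gives the control from below and constancy of the end count gives the combinatorial matching, but these uniform radial estimates at infinity are the delicate point on which the whole converse rests.
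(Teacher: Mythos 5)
First, note that the paper does not prove Theorem~\ref{t:real}: it is quoted with references to \cite{TZ} (for $n=2$) and \cite{JT1} (for $n\ge 3$), so your proposal can only be measured against those sources, not against an argument in this text.

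Your forward implication and your derivation of the confinement statement from condition (2) are fine, as is the Ehresmann step on $\overline{B}_R\cap F^{-1}(D)$. The genuine gap is the sentence ``Thus by (1) the number of ends crossing $S_R$ is constant over $D$, which is exactly the absence of splitting at infinity.'' The identity $\#\bigl(X_t\cap S_R\bigr)=2\chi(X_t)$ holds only when $R$ exceeds \emph{every} critical value of $\rho|_{X_t}$, and that threshold depends on $t$ and can tend to infinity as $t\to a$; this escape of critical radii is precisely the splitting phenomenon, and transversality of $S_R$ to each $X_t$ does nothing to prevent a line component from crossing $S_R$ four or more times. Condition (1) alone does \emph{not} exclude this: Example~\ref{ex:eulercst} of the paper is exactly a family in which $\chi(X_t)\equiv 5$ and all nearby fibres are diffeomorphic, yet splitting (paired with vanishing) occurs, so no fixed $R$ makes $\#(X_t\cap S_R)$ constant. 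The true content of the theorem is that (1) and (2) \emph{together} exclude splitting, and the proofs in \cite{TZ} and \cite{JT1} obtain this by compactifying the family, localizing the jump of the Euler characteristic at the points at infinity, and showing that in the absence of vanishing each splitting forces a strictly positive jump $\chi(X_a)-\chi(X_t)>0$. That localization argument is the heart of the theorem and is entirely absent from your proposal; what you call the ``uniform radial estimates at infinity'' in your last paragraph is not a technical refinement of the gluing step but the main claim to be proved, and it already invalidates the end-counting paragraph that precedes it. A smaller omission: $\chi$ sees only the line components, so you also need confinement together with the interior trivialization to control circle components; this is minor but should be said.
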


In the case of complex polynomial maps $\bC^{n+1}\to\bC^n$, we have shown in \cite{JT2} that when $n\geq 2$, unlike the case $n=1$,
one might have a similar vanishing phenomenon.

\begin{example}\cite{JT2}
Let $F:\bC^3\to\bC^2$ be defined by $$F(x,y,z)=\left(x,[(x-1)(xz+y^2)+1][x(xz+y^2)-1]\right).$$
One easily checks that $(0,0)$ is an interior point of $\bC^2\setminus \Sing F$. It turns out that all fibers of $F$ in a small neighborhood of $(0,0)\in\bC^2$  are isomorphic to
$ \bC \sqcup \bC$, and that $(0,0)$ is a bifurcation point.
\end{example}

On the positive side, we have proved the following theorem

\begin{theorem}\cite{JT2}\label{t:complex}
 Let $F:M\to B$ be a  holomorphic map between connected complex manifolds, where $M$ is Stein and $\dim M=\dim B+1$.
We asume that the Betti numbers  $b_0(t)$ and $b_1(t)$ of all the fibers  $F^{-1}(t)$  are finite.
 Let $\lambda \in \im F \setminus \overline{F(\Sing F)}\subset B$. Then  $\lambda\not\in B(F)$ 
if and only if the following two conditions are satisfied:
\begin{enumerate}
 \item 
the Euler characteristic of the fibres is constant for $t$ in some neighborhood of $\lambda$
\item no connected component of $F^{-1}(t)$ is vanishing at infinity when $t\to\lambda$.
\end{enumerate} \fin
\end{theorem}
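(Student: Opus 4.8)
The plan is to prove the two implications separately. The forward implication is routine: assuming $\lambda\notin B(F)$, fix a $\cC^\infty$ trivialization $\Phi\colon F^{-1}(D)\xrightarrow{\sim}F^{-1}(\lambda)\times D$ over a neighbourhood $D$ of $\lambda$ with $p_2\circ\Phi=F$. All fibres are then diffeomorphic, so $\chi(F^{-1}(t))$ is constant and (1) holds. For (2), a vanishing sequence of components $C_{t_k}\subset F^{-1}(t_k)$ with $t_k\to\lambda$ would, through $\Phi$, correspond to connected components $A_k\times\{t_k\}$ of $F^{-1}(\lambda)\times\{t_k\}$; since $b_0(\lambda)<\infty$, some component $A$ of $F^{-1}(\lambda)$ occurs for infinitely many $k$, and then for a fixed $a\in A$ the points $\Phi^{-1}(a,t_k)\in C_{t_k}$ converge to $\Phi^{-1}(a,\lambda)$, so the $C_{t_k}$ all meet a fixed compact neighbourhood of that limit, contradicting local finiteness. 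Hence (2) holds.

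For the converse, assume (1) and (2). Since $\lambda\notin\overline{F(\Sing F)}$, shrink to a neighbourhood $D$ on which $F$ is a submersion; Ehresmann's theorem would conclude were $F$ proper, so the whole difficulty is the non-properness at infinity. As $M$ is Stein, fix a smooth proper strictly plurisubharmonic exhaustion $\rho\colon M\to[0,\infty)$ and split $M$ into a core $\{\rho\le R\}$ and an end region $\{\rho\ge R\}$. Choosing $R$ outside the (subanalytic, hence meagre) set of critical values of $\rho|_{F^{-1}(\lambda)}$ and using that the polar locus meets the compact level $\rho^{-1}(R)$ in a compact set, the level $\rho^{-1}(R)$ is transverse to every fibre $F^{-1}(t)$ for $t\in D$. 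Then $F$ restricted to $F^{-1}(D)\cap\{\rho\le R\}$ is a \emph{proper} submersion with boundary, trivialized by Ehresmann as $(F^{-1}(\lambda)\cap\{\rho\le R\})\times D$; likewise the boundary $\rho^{-1}(R)\cap F^{-1}(D)$, being the image of a compact set into $D$, is a proper submersion, hence a trivial family whose fibre is the disjoint union of the boundary circles of $F^{-1}(\lambda)$ (the link at infinity).

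The crux is to upgrade this to \emph{equisingularity at infinity}: after possibly enlarging $R$ and shrinking $D$, to arrange that for all $t\in D$ the end region $F^{-1}(t)\cap\{\rho\ge R\}$ is a disjoint union of standard collars $S^1\times[R,\infty)$ depending trivially on $t$. The only obstruction is an asymptotic tangency between the fibres and the levels of $\rho$ escaping to infinity as $t\to\lambda$ (a failure of the Malgrange/$\rho$-regularity condition). I would rule this out by a curve-selection argument: an analytic arc realizing such a tangency would force, along a sequence $t_k\to\lambda$, either a whole collar, and hence a connected component, to leave every compact set, contradicting~(2), or the birth at infinity of an extra handle or end with $b_0$ unchanged, hence a jump in $b_1$ and so in $\chi$, contradicting~(1). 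With asymptotic tangencies excluded the ends form a uniform collar family, and one builds a collar-compatible complete vector field on the end region lifting the coordinate directions of $D$; its flow trivializes the ends compatibly with the boundary, and gluing along $\rho^{-1}(R)$ with the core trivialization yields a global $\cC^\infty$ trivialization over $D$, so $\lambda\notin B(F)$. The main obstacle is precisely this translation of the abstract hypotheses (1)--(2) into uniform geometric tameness at infinity, and it is here that the finiteness of $b_0,b_1$ and the relative dimension one, which makes the fibres finite-topology Riemann surfaces with collar ends, are indispensable.
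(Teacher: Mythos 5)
This theorem is quoted in the paper from \cite{JT2} with no proof given here, so your sketch can only be measured against the statement itself. Your forward implication is correct: a trivialization forces diffeomorphic fibres, hence constant $\chi$, and your use of $b_0(\lambda)<\infty$ to extract a single component $A$ of $F^{-1}(\lambda)$ hit by infinitely many of the $C_{t_k}$, so that the points $\Phi^{-1}(a,t_k)$ pin those $C_{t_k}$ to a fixed compact set, correctly rules out vanishing.

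The converse, however, has a genuine gap exactly where you locate ``the crux''. First, your closing dichotomy is not exhaustive. An end (collar) of $F^{-1}(t)$ escaping to infinity as $t\to\lambda$ is the \emph{splitting} phenomenon, not vanishing: the component containing that collar may keep another piece inside a fixed compact set, so condition (2) is not contradicted; you would instead have to prove that such an escape forces $\chi$ to jump. Your alternative branch assumes ``$b_0$ unchanged, hence a jump in $b_1$ and so in $\chi$'', but the dangerous scenarios are precisely the compensating ones where $b_0$ and $b_1$ jump together leaving $\chi$ fixed --- this is exactly what happens in the real Example \ref{ex:eulercst} of this paper, and showing it cannot happen in the complex case without vanishing is the actual content of the theorem; it requires a semicontinuity statement for $\chi$ in terms of vanishing and splitting that you never establish. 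Second, the tool you invoke, curve selection at infinity, is not available at this level of generality: $M$ is an arbitrary Stein manifold and $F$ is merely holomorphic, so there is no compactification and no semialgebraic or subanalytic structure at infinity along which to select arcs. The ``uniform collar family'' you want to produce is essentially the conclusion of the theorem restated rather than something the hypotheses hand you. In short, the sketch identifies the difficulty correctly but does not overcome it; the missing semicontinuity and splitting analysis for finite-type open Riemann surfaces in a Stein family is the substance of the proof in \cite{JT2}.
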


By Theorem \ref{t:real} and \ref{t:complex}, we get:

\begin{corollary}\label{c:complex}
Let $F:\bK^{n+1}\to\bK^n$ be polynomial map and $t_0\in  \im F \setminus \overline{F(\Sing F)}$. If the fibers $F^{-1}(t)$ are connected and diffeomorphic for $t$ in some neighborhood of $t_0$, then $F$ is locally trivial at $t_0$.
\fin
\end{corollary}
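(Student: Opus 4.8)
The plan is to deduce the statement directly from Theorems \ref{t:real} and \ref{t:complex} by showing that the connectedness and diffeomorphism hypotheses force the two numbered conditions in those theorems. First I would check that the ambient hypotheses are met. In the real case we take $X=\bR^{n+1}$, a nonsingular irreducible algebraic set of dimension $n+1\ge 2$, and view $F$ as a map $X\to\bR^{n}=\bR^{(n+1)-1}$, so that Theorem \ref{t:real} applies with its ``$n$'' equal to our $n+1$. In the complex case $M=\bC^{n+1}$ is Stein with $\dim M=\dim\bC^n+1$, and since $t_0\notin\overline{F(\Sing F)}$ the fibres over a neighbourhood of $t_0$ are smooth affine curves, hence of finite type, so the Betti numbers $b_0$ and $b_1$ are finite as required. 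Moreover, because $t_0\in\im F$ the fibre $F^{-1}(t_0)$ is nonempty, and since all fibres over a neighbourhood $D$ of $t_0$ are diffeomorphic to it, they are nonempty too; thus $D\subseteq\im F$, and after intersecting with the open set $\bK^n\setminus\overline{F(\Sing F)}$ we see that $t_0$ is an interior point of $\im F\setminus\overline{F(\Sing F)}$, which is exactly what the real statement needs.

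Next I would verify condition (1). Diffeomorphic manifolds are in particular homeomorphic and have equal Euler characteristic, so the hypothesis that the fibres $F^{-1}(t)$ are diffeomorphic for $t$ near $t_0$ immediately gives that $\chi(F^{-1}(t))$ is constant on a neighbourhood of $t_0$. This step is formal and requires no computation.

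The heart of the matter is condition (2), the absence of vanishing components at infinity, and this is where connectedness enters. Suppose, for contradiction, that there is a sequence $t_k\to t_0$ and connected components $C_{t_k}\subseteq F^{-1}(t_k)$ forming a locally finite family. Since each $F^{-1}(t_k)$ is connected for large $k$, we must have $C_{t_k}=F^{-1}(t_k)$, i.e. the entire fibre would eventually escape every compact set. To rule this out I would fix a point $x_0\in F^{-1}(t_0)$, which exists because $t_0\in\im F$. As $t_0\notin\overline{F(\Sing F)}$, the map $F$ is a submersion at $x_0$, so by the local normal form of a submersion there are a neighbourhood $U$ of $x_0$ and a neighbourhood $D'\subseteq D$ of $t_0$ with $U\cap F^{-1}(t)\neq\emptyset$ for every $t\in D'$. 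Choosing a compact neighbourhood $K\subseteq U$ of $x_0$, we obtain $K\cap C_{t_k}=K\cap F^{-1}(t_k)\neq\emptyset$ for all large $k$, contradicting local finiteness. Hence no component vanishes at infinity.

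With conditions (1) and (2) in hand, Theorem \ref{t:real} for $\bK=\bR$ and Theorem \ref{t:complex} for $\bK=\bC$ both yield $t_0\notin B(F)$, i.e. $F$ is locally $\cC^\infty$ trivial at $t_0$. The only genuinely delicate point is condition (2): one must use connectedness to collapse the a priori many components appearing in the definition of vanishing into the single fibre, and then anchor that fibre near a point of the central fibre $F^{-1}(t_0)$ via the submersion hypothesis; everything else is bookkeeping to match the hypotheses of the two cited theorems.
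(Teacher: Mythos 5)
Your proof is correct and follows exactly the route the paper intends: the corollary is stated there as an immediate consequence of Theorems \ref{t:real} and \ref{t:complex}, with all verifications left implicit. Your filling-in of the details --- interiority of $t_0$ in $\im F\setminus\overline{F(\Sing F)}$, constancy of $\chi(F^{-1}(t))$ from the diffeomorphism hypothesis, and the exclusion of vanishing components by combining connectedness of the fibres with the submersion normal form at a point of $F^{-1}(t_0)$ --- is sound.
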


 Corollary \ref{c:complex} settles the case of 1-dimensional fibres, which was originally a question by Gurjar in the holomorphic setting, see \cite{JT2}. 

Gurjar's question for higher dimensional fibres sounds as follows:

\begin{question}\label{q:main}
If $F:\bK^{m}\to\bK^p$ is a polynomial map with  $m>p+1$, and $t_0\in  \im F \setminus \overline{F(\Sing F)}$ is such that  the fibers
of $F$ are connected and diffeomorphic over some neighborhood of $t_0$, does it follow that $F$ is locally trivial at $t_0$?
\end{question}

To this day, this question is open in general.   Its answer is positive in the following special case. 
\begin{theorem}\cite{Pa}, \cite{Me}.
If $F:M\to B$ is a  smooth map between $\mathcal{C}^\infty$ manifolds and all fibers of $F$ are diffeomorphic to $\bR^k$ ($k=\dim M-\dim B$),
then $F$ is a locally trivial fibration.
\fin
\end{theorem}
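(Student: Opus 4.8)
The plan is to prove the statement along the lines of Ehresmann's fibration theorem, reducing it to a one-parameter trivialization and integrating a suitably chosen lifted vector field. Since local triviality at a point is a local condition on the base $B$, I would first replace $B$ by a coordinate cube $(-1,1)^d$ around the given point, with $d=\dim B$, and aim to trivialize $F$ over it. The dimension hypothesis $k=\dim M-\dim B$ means we may take $F$ to be a submersion, so its fibres are genuine $k$-dimensional submanifolds and horizontal lifts make sense. Trivializing the base one coordinate direction at a time then reduces the problem to the case $d=1$: here $F:M\to(-1,1)$ is a submersion all of whose fibres are diffeomorphic to $\bR^k$, and one wants a diffeomorphism $F^{-1}(0)\times(-1,1)\xrightarrow{\sim}M$ commuting with the projections. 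The reduction is legitimate because the restriction of $F$ over any coordinate slice is again a submersion with the very same fibres, hence again has all fibres diffeomorphic to $\bR^k$.

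For the one-parameter case I would fix a Riemannian metric on $M$ and take the horizontal lift $\xi$ of $\partial_t$, i.e. the unique vector field $\xi$ orthogonal to the fibres with $dF(\xi)=\partial_t$. If the flow of $\xi$ were complete over every compact subinterval $[a,b]\subset(-1,1)$, integrating $\xi$ would carry $F^{-1}(0)$ diffeomorphically onto each fibre $F^{-1}(t)$ and assemble into the desired trivialization (for $d>1$ one composes the flows of the coordinate lifts over the cube). This is exactly Ehresmann's argument, and it is what makes the proper, e.g. compact-fibre, case automatic. The \emph{entire} difficulty is therefore concentrated in the completeness of $\xi$: a priori an integral curve could escape to infinity inside a fibre in finite $t$-time, and this is precisely the failure exhibited by the non-compact diffeomorphic fibres of the present paper that nonetheless produce a bifurcation value.

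The hard part, and the place where the hypothesis ``fibre $\cong\bR^k$'' must be used essentially, is to modify the connection so that the lift becomes complete. The mechanism I would use is a fibrewise proper exhaustion: one seeks a smooth $\rho:M\to[0,\infty)$ whose restriction to every fibre is proper, together with a connection for which $d\rho(\xi)$ is bounded on $F^{-1}([a,b])$ for each compact $[a,b]$. Boundedness of $d\rho(\xi)$ forces $\rho$ to grow at most linearly along integral curves, which rules out finite-time escape and yields completeness. One may adjust $\xi$ only by vertical vector fields, so as to preserve $dF(\xi)=\partial_t$, and such an adjustment can cancel $d\rho(\xi)$ everywhere except at the fibrewise critical points of $\rho$; the role of the Euclidean structure is to provide a $\rho$ whose fibrewise critical set is a single section of minima, modelled on $x\mapsto|x|^2$ on $\bR^k$, so that $\xi$ can be made tangent to the level sets of $\rho$ away from a controlled locus and the flow cannot stall.

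I expect the genuine obstacle to be the construction of such a globally defined, fibrewise-proper $\rho$ with controlled critical behaviour: building it fibre by fibre is easy, but assembling the fibrewise Euclidean models into one smooth function on $M$ seems to presuppose the very local triviality we are after, so the argument must instead extract $\rho$ from the $\bR^k$-structure without circularly invoking a trivialization. This is the technical heart of the Palais--Meigniez results, and I would either carry it out by a partition-of-unity patching of local Euclidean coordinates along the fibres, or invoke the general principle that a submersion admitting a complete Ehresmann connection is a locally trivial fibration and derive the existence of the complete connection from the $\bR^k$-fibre hypothesis. For contrast, no such $\rho$ exists for general non-compact fibres, which is exactly why the theorem is special to Euclidean fibres and why Question \ref{q:main} remains open in general.
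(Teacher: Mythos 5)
The paper itself offers no proof of this statement: it is quoted from Palmeira \cite{Pa} (one-dimensional base) and Meigniez \cite{Me} (general base), so your attempt can only be judged on its own terms. As it stands it is a strategy outline rather than a proof, and the gap is exactly where you locate it: you never actually construct the fibrewise proper exhaustion $\rho$ (or, equivalently, the complete Ehresmann connection) from the hypothesis that the fibres are diffeomorphic to $\bR^k$. Your closing sentence concedes that you would ``either carry it out by a partition-of-unity patching \dots or invoke the general principle \dots and derive the existence of the complete connection from the $\bR^k$-fibre hypothesis''; that derivation \emph{is} the theorem, and nothing in the proposal supplies it. The patching idea as described is also circular in the way you yourself flag: to get a single smooth $\rho$ on $M$ whose restriction to each fibre is the Euclidean model $x\mapsto|x|^2$ with a section of minima, one essentially needs the fibres to vary locally trivially already. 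Palmeira's actual argument is of a different nature (a structure theorem for foliations of open manifolds by planes, proved by a careful exhaustion and leaf-space analysis), not a perturbation of the Ehresmann lift.

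A second, independent gap is the reduction to $d=1$. A trivialization of $\mathrm{pr}_1\circ F$ over the first coordinate of the cube commutes with $\mathrm{pr}_1\circ F$ but has no reason to respect the remaining components of $F$, so ``one coordinate direction at a time'' does not splice into a trivialization of $F$ over $(-1,1)^d$ without further work; this is precisely why Meigniez's extension to higher-dimensional targets is a separate theorem rather than a corollary of Palmeira's. Finally, two smaller points: the name is Palmeira, not Palais; and the submersion hypothesis is not a consequence of $k=\dim M-\dim B$ (consider $x\mapsto x^3$ on $\bR$, all of whose fibres are points but which is not a locally trivial fibration at $0$), so if you want to ``take $F$ to be a submersion'' you must either assume it or argue for it.
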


The above result was proved by Palmeira \cite{Pa} for maps with 1-dimensional target, and extended by Meignez \cite{Me} 
to maps with arbitrary dimensional target.

\

In this note we show that the answer to Question \ref{q:main} is negative in general. 

We will treat separately the real and the complex cases. 

\section{Special bifurcation values, real setting}\label{s:real}

\begin{theorem} \label{t:exreal}
There exists a polynomial function $F:\bR^4\to\bR^2$, and a point $t_0\in\bR^2$
such that:
\begin{enumerate}
\item $t_0$ is an interior point of $\im F\setminus F(\Sing(F))$,  
\item all the fibers of $F$ on a neighborhood
of $t_0$ are connected and diffeomorphic,  
\item $t_0\in B(F)$.
\end{enumerate}
\end{theorem}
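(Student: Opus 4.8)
The plan is to produce the map $F$ by hand and then verify the three properties directly. The guiding principle, extracted from Example \ref{ex:eulercst}, is that a bifurcation can be forced purely by a \emph{vanishing--splitting} phenomenon at infinity, which is invisible to the Euler characteristic and even to the diffeomorphism type of the total fibre. The new difficulty compared with Theorem \ref{t:real} is that the target is now $\bR^2$, so the fibres are surfaces and the codimension is $2$; this places us exactly outside the range where constancy of $\chi$ together with absence of vanishing guarantees triviality. I would therefore look for $F=(F_1,F_2):\bR^4\to\bR^2$ obtained by \emph{suspending} a planar model of the vanishing--splitting mechanism with two auxiliary coordinates whose role is to glue the several line-components of the planar fibres into a single connected surface, the gluing being arranged so that the diffeomorphism type of the resulting surface does not jump across $t_0$.

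First I would write down an explicit candidate and compute its singular locus. Denoting the coordinates by $(x,y,z,w)$, the aim is to choose $F_1$ carrying the planar degeneration (in the spirit of $x+x^2y$ from Example \ref{ex:1}) and $F_2$ a polynomial in all four variables engineered so that, for every $t$ in a small disc $D$ around $t_0$, the fibre $F^{-1}(t)$ is connected. One then computes the Jacobian matrix and checks that it has maximal rank $2$ along $F^{-1}(D)$; this establishes simultaneously that $t_0$ is a regular value and, together with surjectivity of $F$ near $t_0$, that $t_0$ is an interior point of $\im F\setminus F(\Sing F)$, giving item (1).

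Next I would describe the fibres explicitly and prove item (2). A key structural constraint is that, by the theorem of Palmeira and Meignez quoted above, the common fibre cannot be diffeomorphic to $\bR^2$: if it were, $F$ would automatically be a locally trivial fibration, contradicting (3). Hence the construction must be tuned so that every fibre over $D$ is a fixed connected surface $S$ that is \emph{not} $\bR^2$ (for instance a surface with several ends, obtained from the glued line-components). I would exhibit for each $t\in D$ an explicit diffeomorphism $F^{-1}(t)\cong S$, depending continuously on $t$ away from $t_0$, and verify connectedness directly from the defining equations.

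The main obstacle is item (3): proving that $t_0\in B(F)$ despite (2). All the standard invariants are constant by design, so non-triviality must be detected through a finer, infinity-sensitive obstruction, namely the vanishing of a piece of the (connected) fibre at infinity as $t\to t_0$, exactly as isolated in Theorem \ref{t:real} and Theorem \ref{t:complex} for codimension $1$. Concretely, I would produce a sequence $t_k\to t_0$ and points $p_k\in F^{-1}(t_k)$ with $\|p_k\|\to\infty$ lying on a sub-piece of the fibre that is locally finite in the sense of the definition of vanishing, and then derive a contradiction with $\mathcal{C}^\infty$ triviality: a trivialization $F^{-1}(D)\cong F^{-1}(t_0)\times D$ restricts, over any compact $K\subset\bR^4$, to a proper family whose fibrewise topology cannot jump, whereas the escaping piece forces precisely such a jump of the compact part $F^{-1}(D)\cap K$ across $t_0$. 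Making this contradiction rigorous -- that is, certifying that the escaping piece genuinely vanishes and is not merely reparametrised inside a compact set -- is the technical heart of the argument and the step I expect to require the most care.
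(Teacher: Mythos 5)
Your proposal is a programme rather than a proof, and the two places where it remains a programme are exactly the places where the content lies. First, no explicit polynomial is ever produced, so items (1) and (2) are not actually established; the paper's proof spends most of its length constructing $F(x,y,z,\lambda)=\bigl(x^2-y+y^2f(z,\lambda),\lambda\bigr)$ with $f(z,\lambda)=(z^2+\lambda^2)(\lambda z-1)^2$ and verifying, via an explicit pair of inverse $\mathcal{C}^\infty$ maps, that every fibre over $]0,\infty[\times\bR$ is diffeomorphic to a cylinder with one point removed. Your structural observation that the common fibre cannot be $\bR^2$ (by Palmeira--Meignez) is correct and consistent with this, but it does not substitute for the construction.

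Second, and more seriously, the mechanism you propose for item (3) does not work as stated. Since the fibres are connected, no connected component vanishes at infinity in the sense of the paper's definition, so the obstruction isolated in Theorems \ref{t:real} and \ref{t:complex} is simply not available here --- that is the whole point of the example. Your fallback, namely that an ``escaping piece'' forces a jump of the compact part $F^{-1}(D)\cap K$, is not a valid obstruction: the diffeomorphism type of $F^{-1}(t)\cap K$ is not an invariant of trivial families (already for the projection $\bR^2\to\bR$ and $K$ a closed disc, the slices change type), and a trivialization is under no obligation to respect $K$. The actual obstruction in the paper is homotopy-theoretic: the puncture of the cylindrical fibre sits at $z=1/\lambda$ and escapes to infinity as $\lambda\to 0$, reappearing at $z=0$ in the limit fibre; consequently two explicit loops $\gamma_\lambda,\tilde\gamma_\lambda\subset F^{-1}(1,\lambda)$ are homotopic for $\lambda\neq 0$ while their uniform limits $\gamma_0,\tilde\gamma_0$ are not homotopic in $F^{-1}(1,0)$. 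The paper's Lemma \ref{lemma} shows that in a product $W\times V\to V$ uniform limits of fibrewise-homotopic loops remain homotopic, which yields the contradiction. Some argument of this kind --- detecting a jump in a homotopy class carried by uniformly convergent cycles, rather than counting components or pieces in a compact set --- is needed to close your step (3), and it is precisely the ingredient your sketch leaves open.
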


\begin{proof} 
We start with a polynomial function $f:\bR^2\to \bR$ such that $f(z,\lambda)\geq 0$ for all $(z,\lambda)\in\bR^2$, and later we will make a more precise choice of $f$.
For the moment we define the polynomial map  $F:\bR^4\to\bR^2$ as:  
$$F(x,y,z,\lambda)=\bigl(x^2-y+y^2f(z,\lambda),\lambda\bigr) .$$
We will use the notation $U:=]0,\infty[\times \bR\subset \bR^2$. 

\medskip

\noindent
\textbf{Claim 1.}  $F$ is a submersion on $F^{-1}(U)$. This means that we have to show that if $F(x,y,z,\lambda)\in U$, then
the following system 
$$\left\{
\begin{array}{l}
2x=0\cr
-1+2yf(z,\lambda)=0\cr
y^2\frac{\partial f}{\partial z}(z,\lambda)=0,
\end{array}
\right.$$
has no solution.

Suppose that this system has a solution $(x,y,z,\lambda)$. From $-1+2yf(z,\lambda)=0$ we get that $y\not= 0$ and $f(z,\lambda)=\frac1{2y} \ge 0$, thus $y>0$. We  also have $x=0$, and therefore $F(x,y,z,\lambda)=(-\frac y2,\lambda)$.
 We deduce that $F(x,y,z,\lambda)\not \in U$, hence the system does not have solutions in $U$, and consequently  our claim is proved.
 
\medskip

We will show that for some fixed point $(b,\lambda)\in U$, we have the diffeomorphism:
\begin{equation}\label{eq:diffeo}
 F^{-1}(b,\lambda)  \stackrel{\diffeo}{\simeq}
K_\lambda 
\end{equation}
for any $\lambda\in \bR$, where
$$K_\lambda := \bigl\{(u,v,a)\in\bR^3:u^2+v^2=1\bigr\} \setminus \bigl\{(0,1,a)\in \bR^3 : f(a,\lambda)=0\bigr\},$$

by constructing two reciprocal functions.


\medskip

\noindent
\textbf{Claim 2.} 
The function $g_{b,\lambda}: F^{-1}(b,\lambda)\to\bR^3$, 
$$g_{b,\lambda}(x,y,z,\lambda)=\left(\frac x{\sqrt{x^2+y^2}}, \frac y{\sqrt{x^2+y^2}},z\right),$$
is a well-defined, $\mathcal{C}^\infty$-function. Indeed, 
if $(x,y,z,\lambda)\in F^{-1}(b,\lambda)$, then $x^2-y+y^2f(z,\lambda)=b$. Since $(b,\lambda)\in U$, we must have $b\neq 0$, 
which implies that $(x,y)\neq (0,0)$.  

\medskip

\noindent
\textbf{Claim 3.} 
The intersection $g_{b,\lambda}(F^{-1}(b,\lambda))\bigcap \bigl\{(0,1,a): f(a,\lambda)=0 \bigr\}$ is empty.\\
Indeed, 
 if $\left(\frac x{\sqrt{x^2+y^2}}, \frac y{\sqrt{x^2+y^2}},z\right)=(0,1,a)$
with $f(a,\lambda)=0$, then $x=0$, $\frac y{\sqrt{y^2}}=1$ and hence $y>0$, $z=a$.
Since $(x,y,a,\lambda)\in F^{-1}(b,\lambda)$, we have $x^2-y+y^2f(a,\lambda)=b$, thus we get $b<0$, which contradicts the fact that $(b,\lambda)\in U$.

\medskip

By the definition of $g_{b,\lambda}$ we have the inclusion 
$g_{b,\lambda}(F^{-1}(b,\lambda))\subset \{(u,v,a)\in\bR^3:u^2+v^2=1\}$, thus by Claim 3 we have
got a more precise target of our $\mathcal{C}^\infty$-function, namely $\im g_{b,\lambda} \subset K_\lambda$.

\medskip

\noindent
\textbf{Claim 4.} For any fixed $(u,v,a)\in K_\lambda$, and $(b,\lambda)\in U$, 
the equation in $\xi$:
\begin{equation}\label{eq:xi}
(u^2+v^2f(a,\lambda)) \xi^2- v\xi -b=0
\end{equation}
 has precisely one positive solution.

If $u^2+v^2f(a,\lambda)\neq 0$ then \eqref{eq:xi} is a quadratic equation with real solutions,  the product 
of its roots is $\frac{-b}{u^2+v^2f(a,\lambda)}<0$ (since $b>0$ by our assumption), thus one of its roots is positive and the other is negative. 

If $u^2+v^2f(a,\lambda)=0$, then we must have
$u=0$ and $vf(a,\lambda)=0$. Since $(u,v,a)\in  K_\lambda$, thus $u^2+v^2=1$, we get $v= \pm 1$, hence $v\neq 0$ and thus $f(a,\lambda)=0$, so we actually must have $v=-1$. The solution of \eqref{eq:xi} is then $\xi=b$, which is positive. This finishes the proof of Claim 4.

\medskip

We still keep our $(b,\lambda)\in U$ fixed, and define now the function  $\alpha : K_\lambda \to\bR$ which associates to $(u,v,a)\in  K_\lambda$  the unique positive solution of \eqref{eq:xi}.

\

\noindent
\textbf{Claim 5.}  The function $\alpha$ is  $\mathcal{C}^\infty$.

Indeed, as solution of \eqref{eq:xi}, $\alpha$ can be written as:
\begin{equation}\label{eq:alpha}
\alpha(u,v,a)=\frac{v+\sqrt{v^2+4b(u^2+v^2f(a,\lambda))}}{2(u^2+v^2f(a,\lambda))}
\end{equation}
whenever $u^2+v^2f(a,\lambda)\neq 0$, thus $\alpha$ is of class $\mathcal{C}^\infty$ 
on an open subset of  $K_\lambda$.

On the complement of this open subset of $K_\lambda$, i.e. at some point where $u^2+v^2f(a,\lambda)= 0$, we have seen in Claim 4 above that this point
is of the form $(0,-1,a)$,  where $f(a,\lambda)=0$. Notice that the fraction at \eqref{eq:alpha} is equal to 
\begin{equation}\label{eq:alpha2}
 \alpha(u,v,a)=\frac{2b}{\sqrt{v^2+4b(u^2+v^2f(a,\lambda))}-v}
\end{equation}
whenever both fractions are well-defined.
Since the denominator of  the fraction at \eqref{eq:alpha2} is strictly positive at $(0,-1,a)$, with $f(a,\lambda)=0$, it follows that the function $\alpha$ is well-defined and of class $\mathcal{C}^\infty$ in some neighborhood of any such point.
This finishes the proof of our claim.

\

Using now the function $\alpha = \alpha (u,v,a)$, we define the function:  
$$h_{b,\lambda}: K_\lambda \to\bR^4, \ \ 
h_{b,\lambda}(u,v,a)=(u\alpha ,v \alpha,a,\lambda).$$
which is $\mathcal{C}^\infty$ by its definition. We observe that its image is in $F^{-1}(b,\lambda)$,
since
 $$F(u\alpha ,v\alpha, a,\lambda)=\bigl(u^2\alpha^2-v \alpha +v^2f(a,\lambda)\alpha^2,\lambda\bigr)=(b,\lambda).$$ 
 
 \
 
\noindent
\textbf{Conclusion.} $F^{-1}(b,\lambda) \stackrel{\diffeo}{\simeq} K_\lambda $.

Both sets are of dimension 2, and it turns out  by direct computation that the $\mathcal{C}^\infty$ function 
$$h_{b,\lambda} : K_\lambda \to F^{-1}(b,\lambda)$$
 is the inverse of the $\mathcal{C}^\infty$ function
 $g_{b,\lambda} : F^{-1}(b,\lambda) \to K_\lambda $.\\
   This finishes the proof of \eqref{eq:diffeo}.
 
\medskip

\noindent
\textbf{Particular choice.} We now make a particular choice for the polynomial $f$. Let us set:
$$f: \bR^2 \to \bR, \ \ 
f(z,\lambda)=(z^2+\lambda^2)(\lambda z-1)^2.$$
Then $f(z,\lambda)\geq 0$, and the set $\{ a\in \bR : f(a,\lambda)= 0\}$ consists of a single point, namely the point $\frac1\lambda$ in case $\lambda \not=0$,  and the point 0 in case $\lambda =0$.  Consequently, the set $K_\lambda$ 
is  diffeomorphic to the infinite cylinder with one point removed, which is the diffeomorphic to the sphere $S^2$ with 3 points removed.

\medskip
 
\noindent
\textbf{End of the proof: $F$ is not a locally trivial fibration at $(1,0)\in \bR^2$.} 

Let us consider the loop:
$$\gamma:[0,2\pi]\to \bR^3 , \ \ \gamma(\theta)=(\cos\theta,\sin\theta,1).$$

For $\lambda\in ]-1, 0[ \cup ]0,1[$, the image of
$\gamma$ is included in the set 
$$K_\lambda = \{(u,v,a)\in\bR^3:u^2+v^2=1\}\setminus\{(0,1,\frac 1\lambda)\},$$
and for $\lambda =0$,  the image of $\gamma$ is included in the set 
$$K_0= \{(u,v,a)\in\bR^3:u^2+v^2=1\}\setminus\{(0,1,0)\}.$$

 Therefore, for any $\lambda\in ]-1, 1[$, we can compose $\gamma$ by $h_{1,\lambda}$ and obtain the function:
$$\gamma_{\lambda}:[0,2\pi]\to F^{-1}(1,\lambda),\ \ \  \gamma_{\lambda}=h_{1,\lambda}\circ \gamma.$$  
More precisely, we have:
$$\gamma_{\lambda}(\theta)=(\alpha_{\lambda}(\theta)\cos\theta, \alpha_{\lambda}(\theta)\sin\theta,1,\lambda)$$ 
where $\alpha_{\lambda}(\theta)$
is the unique positive solution of the equation \eqref{eq:xi}, which becomes in our case:
\begin{equation}\label{eq:sol}
 \xi^2\left(\cos^2\theta+\sin^2\theta(1+\lambda^2)(\lambda-1)^2\right)-\xi\cos\theta-1=0.
\end{equation}

It follows immediately from the quadratic formula for the solutions of \eqref{eq:sol} that, when $\lambda\to 0$, the solution $\alpha_{\lambda}(\theta)$, as a function of $\theta$, converges uniformly to the positive solution of the equation:
$$\xi^2\left(\cos^2\theta+\sin^2\theta\right)-\xi\cos\theta-1=0.$$

This implies that the loop $\gamma_\lambda$ converges uniformly, when $\lambda\to 0$, to the loop $\gamma_0 := h_{1,0}\circ \gamma$. 

\

We now consider the following second loop:
$$\tilde\gamma(\theta)=(\cos\theta,\sin\theta,-1) \  \mbox{ and } \
\tilde\gamma_\lambda :=h_{1,\lambda}\circ \tilde\gamma.$$
By the same arguments as above,  we deduce that $\tilde\gamma_\lambda$ converges uniformly, as $\lambda\to 0$, to $\tilde\gamma_0 := h_{1,0}\circ \tilde\gamma$. 

\sloppy
\hyphenation{ho-mo-to-pi-ca-lly}

We then have:

\noindent (a).  For  $\lambda\not= 0$,  $\lambda\to 0$, the loops  $\gamma_\lambda$ and $\tilde\gamma_\lambda$ are homotopically equivalent in $F^{-1}(1,\lambda)$, since 
$\gamma$ and $\tilde\gamma$ are homotopically equivalent in $K_\lambda$. 

\noindent  (b). $\gamma_0$ and $\tilde\gamma_0$ are not homotopically  equivalent in $F^{-1}(1,0)$, 
since 
$\gamma$ and $\tilde\gamma$ 
 are not  homotopically equivalent in $K_0$. 
 
 \
 
Let us now finally  show that $F$ is not locally trivial at $(1,0)$.
By \emph{reductio ad absurdum}, if  $F$ is a trivial fibration at $(1,0)\in \bR^2$,
 we consider a local trivialization $G$ of $F$ over some neighbourhood $V\subset \bR^2$ of $(1,0)$. Then, the application of the following lemma to $G$ and to the homotopy equivalences of loops which correspond to (a) and (b) listed just above, yields a contradiction. 

This ends the proof of our theorem.
\end{proof}

\begin{lemma}\label{lemma}
Let  $W$ and $V$ be smooth manifolds, let $G:W\times V\to V$ be the projection map, let $t_0\in V$, and let $\{t_n\}$ be a sequence of points in $V$ which converges to $t_0$. 
Let $\gamma_n, \tilde\gamma_n:S^1\to G^{-1}(t_n)\subset W\times V$, and  $\gamma_0,\tilde\gamma_0:S^1\to G^{-1}(t_0)\subset W\times V$ be continuous maps such that
$\gamma_n$ converges uniformly to $\gamma_0$, and $\tilde\gamma_n$ converges uniformly to $\tilde\gamma_0$.

 If $\gamma_n$ and $\tilde\gamma_n$ are 
homotopically equivalent in $G^{-1}(t_n)$ for every $n\geq 1$,  then $\gamma_0$ and $\tilde\gamma_0$ are 
homotopically equivalent in $G^{-1}(t_0)$.
\end{lemma}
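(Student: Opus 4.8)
The plan is to exploit the fact that $G$ is a \emph{product} projection, so that every fibre $G^{-1}(t)=W\times\{t\}$ is canonically identified with $W$ via the first projection $\pi:W\times V\to W$. Under this identification a loop $\gamma_n:S^1\to G^{-1}(t_n)$ becomes the loop $\beta_n:=\pi\circ\gamma_n:S^1\to W$, and two loops are freely homotopic inside a single fibre $G^{-1}(t)$ exactly when their images under $\pi$ are freely homotopic in $W$. (Here ``homotopically equivalent'' is understood in the free sense, since the loops need not share a basepoint, as is indeed the case in the application.) Writing also $\tilde\beta_n:=\pi\circ\tilde\gamma_n$, $\beta_0:=\pi\circ\gamma_0$ and $\tilde\beta_0:=\pi\circ\tilde\gamma_0$, the hypothesis reads $\beta_n\simeq\tilde\beta_n$ in $W$ for every $n\geq 1$, and the goal becomes $\beta_0\simeq\tilde\beta_0$ in $W$.

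First I would transfer the convergence. Since $\gamma_n(\theta)=(\beta_n(\theta),t_n)$, with respect to a product metric on $W\times V$ the uniform convergence $\gamma_n\to\gamma_0$ is equivalent to uniform convergence of the two components; in particular $\beta_n\to\beta_0$ uniformly in $W$, and likewise $\tilde\beta_n\to\tilde\beta_0$. The crucial tool is then the standard fact that in a smooth manifold two loops that are uniformly close are freely homotopic. To make this precise I would fix a Whitney embedding $W\hookrightarrow\bR^N$ together with a tubular neighbourhood $\mathcal N$ and a smooth retraction $r:\mathcal N\to W$. Since $\beta_0(S^1)$ is compact, there is $\delta>0$ such that the $\delta$-neighbourhood of $\beta_0(S^1)$ in $\bR^N$ lies inside $\mathcal N$. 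For $n$ large enough one has $\|\beta_n(\theta)-\beta_0(\theta)\|<\delta$ for all $\theta$, so the straight-line homotopy $(s,\theta)\mapsto(1-s)\beta_0(\theta)+s\,\beta_n(\theta)$ stays in $\mathcal N$, and composing it with $r$ yields a free homotopy $\beta_0\simeq\beta_n$ in $W$. The same argument gives $\tilde\beta_0\simeq\tilde\beta_n$ in $W$ for $n$ large.

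Finally I would fix a single index $n$ large enough that both $\beta_0\simeq\beta_n$ and $\tilde\beta_0\simeq\tilde\beta_n$ hold, and concatenate
$$\beta_0\simeq\beta_n\simeq\tilde\beta_n\simeq\tilde\beta_0\quad\text{in }W,$$
where the middle equivalence is precisely the hypothesis. Transporting back through $\pi$ gives $\gamma_0\simeq\tilde\gamma_0$ in $G^{-1}(t_0)$, as required. I expect the only delicate point to be this ``close loops are homotopic'' step: one must check that the ambient convergence in $\bR^N$ really is uniform (which follows because the loops eventually stay in a fixed compact set, on which the embedding is uniformly continuous) and that the tube radius can be chosen uniformly along the compact image $\beta_0(S^1)$. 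Everything else is formal, the product structure of $G$ doing the essential work by trivialising the fibrewise identification.
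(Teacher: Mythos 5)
Your proposal is correct, and its overall skeleton coincides with the paper's: use the product structure to identify every fibre with $W$ via the first projection, reduce the problem to showing that for $n$ large enough $\beta_n$ is freely homotopic to $\beta_0$ in $W$ (and likewise for the tilded loops), and then conclude by transitivity through the hypothesis $\beta_n\simeq\tilde\beta_n$. Where you genuinely diverge is in the proof of the key claim that uniformly close loops in $W$ are homotopic. The paper argues intrinsically: it covers $\im\mu_0$ by finitely many connected open sets diffeomorphic to balls, chooses a partition $0=\lambda_1\leq\cdots\leq\lambda_{k_m}=1$ adapted to this cover via uniform convergence (a Lebesgue-number type argument), and glues local homotopies $h_j$ defined on the pieces $[\lambda_j,\lambda_{j+1}]$. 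You instead invoke the Whitney embedding and tubular neighbourhood theorems, so that the homotopy is given globally by the retracted straight-line formula $r\bigl((1-s)\beta_0(\theta)+s\beta_n(\theta)\bigr)$. Your route buys a cleaner, one-formula homotopy with no gluing or connectedness bookkeeping, at the cost of heavier machinery; the paper's route is more elementary and stays inside $W$, at the cost of the piecewise construction. The two delicate points you flag (uniformity of the ambient convergence on a compact set, and a uniform tube radius over the compact image $\beta_0(S^1)$) are exactly the right ones and are handled correctly; note also that in the paper's application $W\times V$ already sits in a Euclidean space, so the transfer of uniform convergence is immediate there.
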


\begin{proof}[Sketch of the proof]
 Since $G$ is the projection map, we may write, for $n\geq 0$, $\gamma_n(\lambda)=(\mu_n(\lambda),t_n)$ and 
$\tilde\gamma_n(\lambda)=(\tilde\mu_n(\lambda),t_n)$ where
$\mu_n,\tilde\mu_n:S^1\to W$ are continuous maps such that $\mu_n$ converges uniformly to $\mu_0$ and 
$\tilde\mu_n$ converges uniformly to $\tilde\mu_0$. 

In order to prove our lemma, it is sufficient to prove that, for $n$ large enough, $\mu_n$ is homotopically equivalent to $\mu_0$.
To show this claim, we first cover the image of $\mu_0$ with finitely many open sets diffeomorphic to balls. We view $\mu_n$ and $\mu_0$ as being defined on
$[0,1]$ with $\mu_n(0)=\mu_n(1)$, $\mu_0(0)=\mu_0(1)$. By the uniform convergence, there exists $m$ large enough and  a partition of the unit segment by $0=\lambda_1\leq\lambda_2\leq\cdots\leq\lambda_{k_m}=1$ together with an open covering of the image $\im \mu_0$ by connected open sets $B_1, \ldots , B_{k_m} \subset W$ such that, for any $n\ge m$, 
$\mu_n([\lambda_j,\lambda_{j+1}])$ and $\mu_0([\lambda_j,\lambda_{j+1}])$ are both included in 
$B_j$,  and $\mu_0(\lambda_j)$ and $\mu_n(\lambda_j)$ are in  same connected component of  $B_j\cap B_{j-1}$.  Let $\theta_j(s)$ be a path  in $B_j \cap B_{j-1}$  joining
 $\mu_0(\lambda_j)$ to $\mu_n(\lambda_j)$. 
One may construct a continuous map $h_j:[0,1]\times[0,1]\to B_j$ such that
$h_j(s,0)=\theta_j(s)$, $h_j(s,1)=\theta_{j+1}(s)$, $h_j(0,t)=\mu_0(t)$, $h_j(1,t)=\mu_n(t)$. The homotopy between $\mu_0$ and $\mu_n$ is obtained by piecewise glueing these homotopies $h_j$. This ends the proof of our claim.
\end{proof}

\begin{remark}\label{r:real}
The space $M :=F^{-1}\Big(\{1\}\times \bR\Big)$ is a 3-fold in $\bR^4$, and $F$ restrict to an algebraic function $F_|:M\to\bR$ the fibers of which are the same as those of $F$. The proof of the above Theorem shows that  they are diffeomorphic and connected, and that the restriction $F_|$ is not locally trivial at $0$.
\end{remark}


\section{Special bifurcation values, complex setting}\label{s:complex}

In the complex setting we have:

\begin{theorem}\label{t:excomplex}
There exists a polynomial function $F:\bC^4\to\bC^2$, and a point $t_0\in\bC^2$
such that:
\begin{enumerate}
\item $t_0$ is an interior point of $\im F\setminus F(\Sing(F))$, 
\item all the fibers of $F$ on a neighborhood
of $t_0$ are connected and diffeomorphic,
\item $t_0\in B(F)$.
\end{enumerate}
\end{theorem}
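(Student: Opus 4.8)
**The plan is to mirror the real construction from Theorem \ref{t:exreal}, adapting the trick of pairing a vanishing and a splitting phenomenon so that the fiber's diffeomorphism type is preserved while local triviality fails.** The guiding principle is to find a family of affine quadrics over $\bC$ whose fibers are all diffeomorphic to a fixed complex surface (minus finitely many points), but which nonetheless carries a nontrivial monodromy as $\lambda$ passes through the special value. I would again take $F(x,y,z,\lambda)=\bigl(Q(x,y,z,\lambda),\lambda\bigr)$ for a suitable polynomial $Q$ that is quadratic in one pair of variables, so that each fiber $F^{-1}(b,\lambda)$ is a conic bundle over the $z$-line; solving the quadric for the radial variable exhibits the fiber as a punctured affine variety, with the punctures located precisely at the zero set of some coefficient polynomial $f(z,\lambda)$.

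First I would choose $Q(x,y,z,\lambda)=x^2+y^2 f(z,\lambda)-b$-type data (with an appropriate linear or affine correction so that $\Sing F=\emptyset$ near $t_0$), and verify by the same Jacobian computation as in Claim 1 that $F$ is a submersion over a neighborhood of $t_0$; this establishes (1). Next I would establish the analogue of the diffeomorphism $F^{-1}(b,\lambda)\simeq K_\lambda$, where $K_\lambda$ is a fixed affine quadric $\{u^2+v^2=1\}\subset\bC^2$ fibered over the $z$-line with the fibers punctured at the (finitely many) roots of $f(\cdot,\lambda)$. Over $\bC$ the projection from the $(u,v)$-quadric to $z$ makes $K_\lambda$ into a $\bC^*$-bundle over $\bC$ minus a finite set; choosing $f(z,\lambda)$ so that $f(\cdot,\lambda)$ has a \emph{constant number} of simple roots for all $\lambda$ near $t_0$, all the $K_\lambda$ are mutually diffeomorphic, which gives (2) after checking connectedness.

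For (3), the monodromy obstruction, I would track the roots of $f(\cdot,\lambda)$ as $\lambda$ varies. The complex analogue of the real splitting/vanishing pair should be engineered so that as $\lambda\to 0$ two of the punctures of $K_\lambda$ collide or escape to infinity in the $z$-plane, inducing a nontrivial braiding of the punctures around the base circle $|\lambda|=\e$. This braiding produces a nontrivial monodromy automorphism of $\pi_1$ (or of $H_1$) of the punctured quadric fiber, which cannot be realized inside a trivial fibration; the formal contradiction would again be extracted from Lemma \ref{lemma}, comparing the images of suitable loops under the putative trivialization. \textbf{The hard part will be arranging the combinatorics of the punctures so that the fibers remain diffeomorphic across $\lambda=0$ yet the monodromy around $0$ is nontrivial} — over $\bC$ one loses the orientation-based distinction that made the two real loops $\gamma,\tilde\gamma$ inequivalent, so I expect the essential work to be selecting $f(z,\lambda)$ whose discriminant locus in the $\lambda$-plane encircles the origin in a way that permutes or braids the punctures nontrivially, while keeping their cardinality fixed. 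A secondary subtlety is verifying that the total space remains smooth (no singular fibers sneaking in over the punctured neighborhood of $t_0$), which I would control by the same product-of-roots argument used to guarantee one positive root in Claim 4, here replaced by a count of the quadric's intersection with the fibral $\bC^*$.
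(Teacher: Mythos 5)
Your outline identifies the right obstacle but does not overcome it, and the specific mechanism you propose for item (3) cannot work as stated. If $f(\cdot,\lambda)$ has a constant number of \emph{simple} roots for every $\lambda$ in a disk around $0$ (including $\lambda=0$), those roots form an unbranched finite cover of a simply connected base, hence are single-valued holomorphic functions of $\lambda$: there is no braiding, the family of punctured $z$-lines is trivial, and your $F$ would in fact be locally trivial at $t_0$. Conversely, any genuine braiding, collision, or escape to infinity forces the root configuration to degenerate at $\lambda=0$, which changes the number of punctures and destroys item (2). So within the rigid framework ``fixed quadric bundle over the $z$-line, punctures at the roots of $f(\cdot,\lambda)$'' items (2) and (3) are mutually exclusive. (A smaller inaccuracy: for $Q=x^2+y^2f(z,\lambda)$ the fiber over a root of $f$ is a degenerate conic isomorphic to $\bC$ rather than a removed point, so the fiber of $F$ is a conic bundle with jumping fibers, not a product minus points.)

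The paper escapes this dilemma by a device absent from your plan. It takes $F=(f,\lambda)$ with $f=x(y^2+\lambda z-1)(\lambda y^2+\lambda^2 z-\lambda+1)$. For $\lambda\neq 0$ the fiber $F^{-1}(b,\lambda)$ is parametrized as $\bC\times(\bC\setminus\{0,-\frac1\lambda\})$ via the coordinate $v=y^2+\lambda z-1$: one puncture sits at $v=0$ and the other at $v=-\frac1\lambda$, which \emph{escapes to infinity} as $\lambda\to 0$. At $\lambda=0$ the third factor of $f$ degenerates to the constant $1$ and the fiber $\{x(y^2-1)=b\}$ is again $\bC\times(\bC\setminus\{\pm1\})$, but now the two punctures live in the $y$-coordinate. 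Thus every fiber is isomorphic to $\bC$ times a twice-punctured plane, giving (2), while the identity of the punctures changes discontinuously at $\lambda=0$. The bifurcation is then detected not by braiding but by two explicit loops that are homotopic in each nearby fiber $F^{-1}(b,\lambda)$, $\lambda\neq0$ (they have the same winding number about $v=0$ and do not see the far-away puncture), whose limits as $\lambda\to0$ wind around the two \emph{different} punctures $y=\pm1$ and hence are not homotopic in $F^{-1}(b,0)$; Lemma \ref{lemma} then contradicts local triviality. You would need to build some analogue of this escape-to-infinity-with-coordinate-change mechanism into your construction before it yields an actual counterexample, so as written the proposal has a genuine gap at its central step.
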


\begin{proof} Let $f:\bC^4\to\bC$ be defined by 
$$f(x,y,z,\lambda)=x(y^2+\lambda z-1)(\lambda y^2+ \lambda^2 z-\lambda+1),$$ 
and let $F:\bC^4\to\bC^2$,  
$$F(x,y,z,\lambda)=(f,\lambda).$$

We fix the point $t_0=(1,0)$ and we denote by $U$ an open neighborhood of $t_0$ which does not intersect $\{0\}\times \bC$.
We have $\frac{\partial f}{\partial x}=(y^2+\lambda z-1)(\lambda y^2+
\lambda^2 z-\lambda+1)$,  thus $\frac{\partial f}{\partial x}=0$  implies
$f=0$. This shows that  $F$ is submersion on $F^{-1}(U)$.

We will show now that for each $(b,\lambda)\in U$ the fiber $F^{-1}(b,\lambda)$ is algebraically isomorphic to 
$\bC\times (\bC\setminus\{-1,1\})$.

 \medskip
 
\noindent
{\bf Case 1}. $\lambda =0$. \\ We have
 $F^{-1}(b,0)=\{(x,y,z)\in\bC^3:x(y^2-1)=b\}\times\{0\}\subset \bC^4$. 
As $b\neq 0$, by separating $x$, we immediately get an isomorphism $F^{-1}(b,0) \simeq \bC\times (\bC\setminus\{-1,1\})$.

\medskip

\noindent {\bf Case 2}. $\lambda \neq0$. \\ We then have:
\begin{align*}
F^{-1}(b,\lambda)=\Big\{(x,y,z)\in\bC^3:
x(y^2+\lambda z-1)( y^2+
\lambda z-1+\frac 1\lambda)=\frac b\lambda\Big\}\times\{\lambda\}.
\end{align*}

Let then $h_\lambda:\bC\times(\bC\setminus\{0,-\frac 1\lambda\})\to F^{-1}(b,\lambda)$ be defined by:
$$h_\lambda(u,v)=\Big(\frac b{v(\lambda v+1)},u,\frac{v-u^2+1}\lambda,\lambda\Big).$$
 
It is straightforward to check that
$$g_\lambda(x,y,z,\lambda)=(y,y^2+\lambda z-1)$$
is its inverse. We thus have our claimed isomorphism in this case too.

Let us prove now that $F$ is not locally trivial at $(1,0)$.  To do that, we will construct two loops in 
$F^{-1}(b,\lambda)$ that are homotopy equivalent, then  make $\lambda \to 0$, and obtain 
their limits, which turn out to be homotopy non-equivalent loops in $F^{-1}(b,0)$.  

 We consider the following two loops:
\begin{align*}
&\gamma_{1,\lambda}:[0,2\pi]\to\bC\times\Big(\bC\setminus\{0,-
\frac 1\lambda\}\Big),\ \ \ \gamma_{1,\lambda}(\theta)=(1+e^{i\theta},(1+e^{i\theta})^2-1)\\
&\gamma_{2,\lambda}:[0,2\pi]\to\bC\times\Big(\bC\setminus\{0,-
\frac 1\lambda\}\Big),\ \ \ \gamma_{2,\lambda}(\theta)=(-1+e^{i\theta},(-1+e^{i\theta})^2-1).
\end{align*}

The paths $\theta\mapsto(1+e^{i\theta})^2-1=e^{i\theta}(e^{i\theta}+2)$
and $\theta\mapsto(-1+e^{i\theta})^2-1=e^{i\theta}(e^{i\theta}-2)$ have the same winding number around $0$.  It then follows that,  for $\lambda\not=0$ close enough to $0$, the loops
$\gamma_{1,\lambda}$ and $\gamma_{2,\lambda}$ are homotopically equivalent in 
$\bC\times(\bC\setminus\{0,-\frac 1\lambda\})$. Therefore $h_\lambda\circ\gamma_{1,\lambda}$
and $h_\lambda\circ\gamma_{2,\lambda}$ are homotopically equivalent in  $F^{-1}(b,\lambda)$, for $\lambda \not= 0$ and $b\not= 0$. 
By composing with  $h_\lambda$, and then making $\lambda\to 0$, we get:
 $$\lim_{\lambda \to 0}(h_\lambda\circ\gamma_{1,\lambda})(\theta)= \Bigl(\frac{b}{e^{i\theta}(e^{i\theta}+2)},1+e^{i\theta},0,0\Bigr),$$ 
$$\lim_{\lambda \to 0}(h_\lambda\circ\gamma_{2,\lambda})(\theta)=\Bigl(\frac{b}{e^{i\theta}(e^{i\theta}-2)},-1+e^{i\theta},0,0\Bigr).$$

As one can easily check, the two limits displayed above, call them $\gamma_{1,0}$ and $\gamma_{1,0}$, are loops in  the fibre 
$$F^{-1}(b,0) := \bigl\{(\frac{b}{y^2-1}, y, z, 0)\in\bC^4 : y \not= \pm 1\bigr\},$$
where $b\not= 0$, and $y\in \bC\setminus\{-1,1\})$.

If the loops $\gamma_{1,0}$ and $\gamma_{1,0}$ are homotopy equivalent  in $F^{-1}(b,0)$, then so must be their projections to the second factor  $\pi_2: F^{-1}(b,0) \to \bC\setminus\{-1,1\}$, namely
$\theta \mapsto (1+e^{i\theta})$ and $\theta \mapsto (-1+e^{i\theta})$.
But it is easy to check that these two later loops are \emph{not} homotopically equivalent in $\bC\setminus\{-1,1\}$.  This contradiction shows that the two limit loops $\gamma_{1,0}$ and $\gamma_{1,0}$ are not homotopy equivalent  in $F^{-1}(b,0)$,
We may then apply Lemma \ref{lemma}, as we have done precedingly, to conclude that $F$ is not locally trivial at $(1,0)$.
\end{proof}

The following remark is similar to Remark \ref{r:real}:

\begin{remark}\label{r:complex}
The complex algebraic map $F$ restricts to an algebraic function $F_|:M\to\bC$, where $M=F^{-1}\Big(\{1\}\times \bC\Big)$ is a complex 3-fold in $\bC^4$, the fibers of which are the same as those of $F$ over $\{1\}\times \bC$. The proof of Theorem \ref{t:excomplex} shows that they are connected  and diffeomorphic over some neighbourhood of the point $0\in \bC$, and also shows that  the restriction $F_|$ is not locally trivial at $0$.
\end{remark}

\

 Given our Theorems \ref{t:exreal} and \ref{t:excomplex}, and our  Remarks \ref{r:real} and \ref{r:complex}, respectively,  the following question on the bifurcation set arises naturally:

\begin{question}
Let $F:\bK^3\to\bK$ be a  polynomial function such that its fibres over some neighborhood of  $t_0\in \im F\setminus F(\Sing(F))$ are connected 
and diffeomorphic. Is then $t_0$ a non-bifurcation value of $F$?
\end{question} 
 



\end{document}